\newtheorem{thm}{Theorem}
\newtheorem{defin}[thm]{Definition}
\newtheorem{lem}[thm]{Lemma}
\newtheorem{prop}[thm]{Proposition}
\DeclareMathOperator{\term}{term}
\DeclareMathOperator{\init}{init}
\newtcolorbox[auto counter]{problem}[2][]{ colback=red!5!white,colframe=black!75!white,fonttitle=\bfseries, title=Problem~\thetcbcounter: #2,#1}
\newcommand{%
    
    \import{./figures/}{.pdf_tex}
}[2][1]{%
    
    \import{./figures/}{#2.pdf_tex}
}
\title{Effective drift estimates for random walks on graph products}
\author{Kunal Chawla}
\date{}
\begin{document}
\maketitle
\begin{abstract} 
We find uniform lower bounds on the drift for a large family of random walks on graph products, of the form $ \mathbb{P} (|Z _{n}| \leq \kappa n) \leq e ^{-\kappa n}  $ for $ \kappa > 0 $. This includes the simple random walk for a right-angled Artin group with a sparse defining graph. This is done by extending an argument of Gou\"{e}zel, along with the combinatorial notion of a piling introduced by Crisp, Godelle, and Wiest. We do not use any moment conditions, instead considering random walks which alternate between one measure uniformly distributed on vertex groups, and another measure over which we make almost no assumptions. 	  \end{abstract}
 
 \section{Introduction}
 Let $ G $ be a group acting on a metric space $ X $, and let $ \mu $ be a probability measure. Let $ g _{1}, g _{2},\dots  $ be i.i.d. $ G $-valued random variables with distribution $ \mu $. One can construct a random walk on $ X $ by picking a basepoint $ o \in X $ and letting  
 \[ Z _{n}\cdot o = g _{1} \dots g _{n} \cdot o.\] 
 Often considered in the literature is qualitative long-term behaviour of $ Z _{n} $. Furstenberg showed that random walks on semi-simple Lie groups converge almost surely to a point on a natural boundary at infinity \cite{furstenberg1963poisson}. Kaimanovich identified the Poisson boundary for a general class of groups with hyperbolic properties \cite{kaimanovich2000poisson}. Karlsson and Margulis showed that certain random walks on Busemann non-positively curved spaces sublinearly track a geodesic \cite{karlsson1999multiplicative}, and Tiozzo exhibited a general condition to ensure sublinear tracking \cite{tiozzo2015sublinear}. Benoist and Quint \cite{benoist2016central} exhibited a central limit theorem for random walks with finite variance on Gromov hyperbolic groups. Maher-Tiozzo showed that a non-elementary random walk on a (not necessarily proper) hyperbolic space converges to the boundary \cite{maher2018random}. Nevo and Sageev identified the Poisson boundary for groups acting on CAT(0) cube complexes \cite{nevo2013poisson}. Most of these results rely on geometric assumptions about the group, usually some sort of nonpositive curvature condition, as well as moment or entropy assumptions on $ \mu $.

 In the recent literature are inquiries into large deviations principles for random walks on hyperbolic spaces. Let $ X $ be a Gromov hyperbolic $ G $-space with a basepoint $ o $. Maher and Tiozzo showed that if $ \mu $ has finite support, then $ \mathbb{P}(d(Z _{n}o, o) \leq \kappa n) $ decays exponentially for some $ \kappa $. This was upgraded to an exponential moment condition by \cite{sunderland2020linear}. Later, in \cite{boulanger2020large} it was shown that this statement holds for all $ \kappa $ up to the rate of escape \[\ell := \lim_{n \to \infty} \frac{\mathbb{E} \left[d(Z _{n} o, o)\right]}{n}.\] 

 Recently, Gou\"{e}zel \cite{gouezel2021} has shown, with a clever geometric argument, that all moment assumptions can be removed. This argument does not rely on boundary theory, and is entirely quantitative. The idea is that one can decompose a sample path into segments which go in one direction, and points where the path `pivots' from one direction to the next. By hyperbolicity, in most directions the sample path will move further away from the basepoint. One can bound the number of pivotal points from below by a sum of i.i.d. random variables with positive expectation. Then the result follows from the theory of large deviations. This argument has recently featured in two articles of Choi, to explore genericity of pseudo-Anosovs  \cite{choi2021pseudo} as well as sublinear tracking and central limit theorems \cite{choi2021central}.

We apply this technique to give effective estimates for the drift of certain random walks on graph products. Giving effective lower bounds for drift is a notoriously hard problem. Furstenberg exhibited an integral formula for the drift, however this is not effective because it requires knowledge about the harmonic measure. In this paper, we consider a class of random walks on graph products of groups acting on their Cayley graphs, which are not usually hyperbolic. Let $ \Gamma $ be a graph, with vertex set $V$ and edge set $E$. Suppose that for each vertex $ v \in V$, there is an assigned group $ G _{v} $, which has the (not necessarily finite) presentation $G_v = \langle S _{v}| R _{v}  \rangle $. The graph product, denoted by $G =  G(\Gamma) $, is the group defined by 
 \[ G(\Gamma) = \langle \sqcup _{v \in V} S _{v}| \sqcup _{v \in V} R _{v} \sqcup_{(v, w) \in E} [S _{v}, S _{w}]  \rangle  .\] 

For example, if the graph is a clique, then $ G $ is the direct product $ G _{1} \times \dots \times G _{n} $. If the graph has no edges, then $ G $ is the free product $ G _{1} * \dots * G _{n} $. If the graph is a path with 3 vertices and $ G _{1} = G _{2} = G _{3} = \mathbb{Z}  $, then $ G = \mathbb{Z} ^{2} * \mathbb{Z} $. Graph products need not be finitely generated or hyperbolic, for example if each vertex group is an infinite direct sum of copies of $ \mathbb{Z} $. In general, if each $ G _{i} $ is a copy of $ \mathbb{Z} $, then $ G $ is the right-angled Artin group on the graph in question. The graph product interpolates between the direct product and free product, where a sparse graph means that $ G $ is closer to a free product. 

 Given a graph $ \Gamma $, let $ D $ be the number of vertices, $ C $ the maximum size of a clique, and $ B $ the maximum size of a 1-neighbourhood of a clique. For example, if $ \Gamma $ is a cycle of length $ 5 $, then $ C = 2 $ and $ B = 4 $. We say that $ \Gamma $ has \emph{small cliques} if $ D > 3B+2C $. Moreover, we say that a measure on $ G $ is \emph{alternating} if it is of the form $ \mu * \nu $ where $ \mu(G _{v} \setminus \{ e \}) = 1/D $ for any $v \in V$, and $\nu$ is any probability measure with $ \nu(e) = 0 $. For example, if $ \mu $ is the measure driving the simple random walk, then $ \mu ^{2} $ is alternating.

In this article, we prove the following:
\begin{thm}
	Let $ \Gamma $ be a graph with small cliques and let $ G(\Gamma) $ be a graph product with vertex groups $ G _{1}, \dots, G _{D} $. 
	Then there exists an effective constant $ \kappa = \kappa(\Gamma) $ such that for any random walk $ (Z _{n}) $ driven by an alternating measure, we have 
	\[ \mathbb{P} (|Z _{n}| \leq \kappa n) \leq e^{-\kappa n} \qquad \textup{for any }n.\]
	\end{thm}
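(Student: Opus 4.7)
My plan is to adapt Gou\"ezel's pivoting argument from the hyperbolic setting to graph products by replacing hyperbolic geometry with the combinatorial piling calculus of Crisp--Godelle--Wiest. The alternating structure $\mu * \nu$ is decisive: the $\mu$-steps provide a sequence of independent, uniformly distributed vertex choices that will supply the randomness needed for pivoting, while the $\nu$-steps are adversarial and must merely be shown not to disrupt the structure created by the $\mu$-steps.

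\textbf{Setup.} First I would fix the piling representation of elements of $G(\Gamma)$: an element $g$ is encoded by a stack of ``syllables,'' one column per vertex group, where the relative order of two syllables from non-commuting vertex groups is preserved, and $|g|$ is the total number of non-identity syllables after reduction. Write $Z_n = g_1 g_2 \cdots g_n$ so that each pair $g_{2k-1} g_{2k}$ is distributed as $\mu * \nu$, and let $v_k$ denote the vertex chosen at the $k$-th $\mu$-step. The goal is to define a notion of pivot so that distinct pivots contribute independent units of length to $|Z_n|$ and a positive density of $\mu$-steps are pivotal with exponentially small failure probability.

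\textbf{Pivots and the small cliques condition.} I would declare the $k$-th $\mu$-step a pivot candidate when $v_k$ avoids a ``bad set'' of vertices determined by the piling geometry around position $k$: the clique supporting the top of the left piling (size at most $C$), the $\Gamma$-neighbourhood of that clique (size at most $B$), the analogous data coming from the right, and the contribution of the adjacent $\nu$-letter. A careful bookkeeping should show that the bad set has cardinality at most $3B + 2C$, so that conditionally on everything else the uniformly chosen $v_k$ is a pivot candidate with probability at least $p := (D - 3B - 2C)/D > 0$. A Gou\"ezel-style freezing argument then shows that pivot candidates remain true pivots under any completion of the walk: the piling structure prevents subsequent syllables from commuting past the $v_k$-syllable to cancel it, so the length of the final piling gains at least one from each pivot.

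\textbf{Conclusion and main obstacle.} Each true pivot contributes at least one syllable to the final piling, so $|Z_n| \geq N_n$, where $N_n$ counts pivots among the first $\lfloor n/2 \rfloor$ $\mu$-steps. A Hoeffding-type large deviations bound applied to the Bernoulli indicators of pivot status, each conditionally lower bounded by $p$, yields $\mathbb{P}(N_n \leq (p/3) n) \leq e^{-cn}$, whence the theorem with $\kappa = \min(p/3, c)$, uniformly in $n$ because the estimate is genuinely exponential. The main obstacle is the freezing step: because $\nu$ is arbitrary we cannot appeal to ``typical'' $\nu$-behaviour and must instead verify purely combinatorially, via the Crisp--Godelle--Wiest calculus, that the syllable placed at $v_k$ persists in the reduced piling of $Z_n$ whenever $v_k$ lies outside the bad set. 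Calibrating the bad set so that $D > 3B + 2C$ is precisely what makes this combinatorial argument go through is the technical heart of the proof.
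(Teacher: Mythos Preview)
Your outline captures the spirit of the argument, but the freezing step is a genuine gap rather than just the ``technical heart.'' You claim that if $v_k$ avoids a bad set of size at most $3B+2C$, the $s_k$-syllable persists in the final piling \emph{under any completion of the walk}. This is not a purely combinatorial fact: the local geodesic condition only involves $\term(Z_{k-1})$ (size $\le C$) and the $1$-neighbourhood of $\init(w_k)$ (size $\le B$), accounting for $B+C$ vertices. Anything further ``from the right'' depends on $s_{k+1}, s_{k+2}, \dots$, which are still random; you cannot place them in a bad set determined before they are revealed. In particular, a later bad $s_{n+1}$ can invalidate the prefix condition and destroy previously pivotal times, so the pivot indicators are not conditionally independent Bernoullis, and Hoeffding does not directly apply.

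The paper does \emph{not} freeze pivots. It lets $A_n = \#P_n$ evolve as a Markov-like process and introduces the equivalence relation of ``pivoted'' sequences: conditioning on $\mathcal E(\overline s)$, the $s_{k_i}$ at the current pivotal times become independent and uniform over their allowed sets (size $\ge D-B-C$). When $s_{n+1}$ is appended, with probability $\ge (D-B-C)/D$ it adds a pivot, but with the remaining probability it may destroy several; each existing pivot is lost independently with probability $\le B/(D-B-C)$ (since only about $B$ of the $\ge D-B-C$ allowed choices for $s_{k_q}$ are adjacent to the new initial clique). This yields $A_{n+1}$ stochastically dominating $A_n + U$, where $U$ has a geometric negative tail, and one computes $\mathbb E[U] > 0$ precisely when $D > 3B+2C$. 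So the threshold $3B+2C$ arises from this expectation computation, not from the cardinality of a single bad set. The final step is then a Chernoff bound on i.i.d.\ copies of $U$, not a Hoeffding bound on Bernoullis.
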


	We will see that $ \kappa $ depends only on $ B, C, $ and $ D $, and can be effectively computed (see page $ 8 $).
	
\textbf{Example.} For example, if $ \Gamma $ is a $ D $-cycle for $ D > 16 $ then $C = 2, B = 4 $, and $\kappa \geq 0.3$. 
 In particular, the drift for the simple random walk on the corresponding RAAG is at least $ 0.15 $. In this regime, we will see that $ \kappa \to 1$ as $ D \to \infty $. 

	Here, the word length $ |Z _{n}| $ is taken with respect to any generating set where each generator lies inside one of the vertex groups. As our argument will show, this estimate can be phrased in terms of the syllable length of $ Z _{n} $. That is to say, the minimum length of a representation $ Z _{n} = g _{i _{1}}\dots g _{i _{n}} $ where $ g _{i _{k}}, g _{i _{k+1}}  $ are elements of distinct vertex groups for all $ 1 \leq k \leq n-1 $. As a result, this $ \kappa $ is completely independent of the choice of vertex groups and the measure $ \nu $, and only depends on the graph structure. In the case that all the vertex groups are $ \mathbb{Z} $, then this gives us a quantitative drift estimate for certain right-angled Artin groups.	
	
	This gives a quantitative sense in which these RAAGs are closer to a free group than a free abelian group. The use of an alternating random walk $ \mu * \nu $ is notable because the measure $ \nu $ is allowed to have arbitrarily fat tails. All of the regularity comes from the $ \mu $. This choice of $ \kappa $ is then uniform over a large class of random walks. Instead of relying on moment conditions, we combine some ideas from Gou\"{e}zel's argument with a combinatorial tool. In particular, we extend the notion of a `piling' from \cite{crisp} to bring the notion of pivotal points over to the graph product setting.

		  To prove our main theorem, we write $ g _{i} = s _{i}w _{i} $ where $ s _{i} \sim \mu $ and $ w _{i} \sim \nu $. Then we fix the $ w _{i} $'s and keep the randomness coming from the $ s _{i} $'s. To each pair $ w _{n-1}, w _{n} $, we can find a $ s _{n} $ from a vertex far away on the graph, so that $ |Z _{n}| $ strictly increases. As the graph is sparse, there are many such choices for $ s _{n} $. Hence we can bound the length $ |Z _{n}| $ from below by a sum of $ n $ i.i.d. copies of some random variable $ U  $ with positive expectation.

		  The paper is organized as follows: in section 2 we define pilings and introduce the notion of a terminal and initial clique for elements of graph products. In section 3 we describe the notion of pivotal points, inspired by Gou\"{e}zel. Finally, in section 4, we prove our main theorem and state a formula for the drift. 
	 
\textbf{Acknowledgements:} The author would like to thank Giulio Tiozzo for many helpful conversations and comments. This work was completed under funding from an NSERC USRA.
\section{Pilings}
The notion of a piling was introduced by Crisp, Godelle, and Wiest in \cite{crisp} to give a normal form for right-angled Artin groups. They used pilings to solve the conjugacy problem in this setting. 

Since right-angled Artin groups interpolate between free groups and free abelian groups, one looks for a way to quantify how close a RAAG is to either extreme. One way to do this is to explore when a word in a RAAG locally looks like a word in a free product. Consider for example the group $ \mathbb{Z} ^{2}* \mathbb{Z} = \langle a,b,c | [a,b]  \rangle $. Then the word $ aba ^{-1} $ can be shortened, whereas $ aca ^{-1} $ cannot. Consider, for example, a word of the form $ acbscba ^{-1} $ for some $ s $ chosen randomly from $ \{ a ^{\pm},b ^{\pm},c ^{\pm}  \} $. We want to quantify the probability with which the word can be shortened. This is the role of pilings in our argument. 

We start off by extending the definition of pilings to graph products. We explain how to produce a piling for a word in $ \sqcup _{i=1} ^{n} G _{i} $, then show that this is independent of the choice of word representative. This will produce a well-defined piling for an element of $ G $. 

Let $G$ be a graph product with vertex groups $G _{1}, \dots, G _{D}$. Let $\mathcal{A} = \{0 \} \sqcup _{i=1} ^{D} G _{i} \setminus \{ e \}$, 
and let $\mathcal{A}^\star$ the set of finite words in the alphabet $\mathcal{A}$. We denote as $\epsilon$ the empty word.
A piling for $G$ is a map $\Pi: G \to (\mathcal{A}^*)^D$ defined as follows.

\begin{defin}
Let $ G $ be a graph product with vertex groups $ G _{1}, \dots, G _{D} $. A \emph{piling} $ \Pi(h) $ for an element $ h \in G $ is a list of $ D $ words in the alphabet $\mathcal{A}$, defined inductively as follows:
	\begin{itemize} 
		\item The piling $ \Pi(e) $ for the trivial element is empty.
		\item If $ h = h'g _{i} $, where $ g _{i} \in G _{i} $, then 
			\begin{enumerate} 
				\item If the $i$th string is empty or ends in a $ 0 $, then the $i $th string of $ \Pi(h) $ is given by appending $ g _{i} $ to the $ i $th string of $ \Pi(h') $, and a $ 0 $ to the $ j $th string for every $ j $ such that vertices $ v _{i} $ and $ v _{j} $ are not adjacent.
				\item If the $ i $th string of $ \Pi(h') $ ends in an element $ g' _{i} $ of $ G _{i} $, then the $ i $th string of $ \Pi(h) $ is $ g' _{i} g _{i} $. If $ g' _{i} g _{i} $ is the identity, then remove the $ 0 $ on the $ j $th strings, where $ v _{i} $ and $ v _{j} $ are not adjacent.
				  \end{enumerate}
	 \end{itemize}	 
\end{defin} 
	
By a quick computation, one can see that if $ s _{i} $ and $ s _{j} $ are elements of the adjacent groups $ G _{i} $ and $ G _{j} $, then $ \Pi(hs _{i} s _{j}) = \Pi(h s_{j} s _{i}) $. Likewise, we have $ \Pi(hs _{i} s ^{-1} _{i}) = \Pi(h) $. By \cite[Lemma 3.1]{HM}, this means we have a well-defined (independent of word representative) piling for any element of $ G $.

\medskip

	 \textbf{Example 1:} Consider the group $ \mathbb{Z} ^{2} * \mathbb{Z} = \langle a,b,c | [a, b] \rangle $. Then \begin{itemize}
		 \item A piling for $ a $ is $ \left(a,  \epsilon, 0\right) $.
		 \item A piling for $ ac$ is $ \left(a0, 0, 0c\right) $.
		 \item A piling for $ acb$ is $ \left(a0,0b,0c0\right) $. 

	 \end{itemize} 
	 \begin{figure}[!h]
		 \centering
\tikzset{every picture/.style={line width=0.75pt}} 

\tikzset{every picture/.style={line width=0.75pt}} 

\begin{tikzpicture}[x=0.75pt,y=0.75pt,yscale=-1,xscale=1]

\draw    (140,200) -- (500.14,200) ;
\draw    (230.92,31) -- (230.92,200) ;
\draw    (330.92,30) -- (330.92,200) ;
\draw    (430.92,31) -- (430.92,200) ;

\draw (280,180) node [anchor=south west][inner sep=0.75pt]  [font=\Huge]  {$0$};
\draw (180,130) node [anchor=south west][inner sep=0.75pt]  [font=\Huge]  {$0$};
\draw (380,180) node [anchor=south west][inner sep=0.75pt]  [font=\Huge]  {$0$};
\draw (380,80) node [anchor=south west][inner sep=0.75pt]  [font=\Huge]  {$0$};
\draw (180,180) node [anchor=south west][inner sep=0.75pt]  [font=\Huge]  {$a$};
\draw (280,130) node [anchor=south west][inner sep=0.75pt]  [font=\Huge]  {$b$};
\draw (380,130) node [anchor=south west][inner sep=0.75pt]  [font=\Huge]  {$c$};

\end{tikzpicture}

		 \caption{A piling for $ acb $.}
	  \end{figure}

	 Observe that the piling for $ \Pi(g ^{-1}) $ is given by reversing all strings in $ \Pi(g) $ and swapping each $ g _{i}$ with $ g _{i} ^{-1} $.

	 \begin{defin} 
		 Given an element $ g \in G $, the terminal clique $ \term(g) $ is the set of vertices $ v _{i} $ in the graph such that the $ i $th string in the piling $ \Pi(g) $ ends in a nontrivial element of $ G _{i} $.
	  \end{defin}

	  Likewise we define the \emph{initial clique} $ \init(g) $ to be the set of vertices $ v _{i} $ such that the $ i $th string of $ \Pi(g) $ starts with a nontrivial element of $ g _{i} $. Observe that $ \init(g) = \term(g ^{-1}) $.

	  \begin{lem} 
	Let $ g,h$ be elements in the graph product $ G $ such that $ \term(g) \cap \init(h) = \varnothing $. Then $ \Pi(gh) = \Pi(g)\Pi(h) $. 
	  \end{lem}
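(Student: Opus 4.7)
The plan is to build $\Pi(gh)$ by processing the letters of $h$ one at a time and show that each step places the corresponding entry of $\Pi(h)$ on top of the corresponding column of $\Pi(g)$ with no interference. First, choose a word representative $h = s_1 s_2 \cdots s_n$ whose insertion reproduces $\Pi(h)$ without cancellations: read the non-trivial entries of $\Pi(h)$ in an order compatible with their vertical positions (for instance, bottom-up), so that when $\Pi(h)$ is constructed letter by letter from this representative, rule (2) never fires---every $s_k$ is placed fresh at the top of its column, on either an empty string or one ending in $0$, and is never subsequently multiplied or removed.

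Write $h_k := s_1 \cdots s_k$, with $s_k \in G_{j_k}$. I will prove $\Pi(g h_k) = \Pi(g)\Pi(h_k)$ by induction on $k$, where the right-hand side denotes component-wise concatenation of the $D$ strings. The case $k = 0$ is immediate. For the inductive step, I need only verify that the rule appending $s_k$ to $\Pi(gh_{k-1}) = \Pi(g)\Pi(h_{k-1})$ agrees with the rule appending $s_k$ to $\Pi(h_{k-1})$. By the choice of representative, the $j_k$-th column of $\Pi(h_{k-1})$ is either empty or ends in $0$. In the non-empty case, the $j_k$-th column of $\Pi(g)\Pi(h_{k-1})$ also ends in $0$, so rule (1) fires identically on both sides. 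In the empty case, the letter $s_k$ becomes the first (and, by freshness, permanent) entry on the $j_k$-th column of $\Pi(h)$, so $v_{j_k} \in \init(h)$; by the hypothesis $\term(g) \cap \init(h) = \varnothing$ we get $v_{j_k} \notin \term(g)$, so the $j_k$-th column of $\Pi(g)$ is empty or ends in $0$, and rule (1) again applies in the same manner. In all cases the $0$'s added on columns $\ell$ with $v_\ell$ not adjacent to $v_{j_k}$ land in the same position on both sides, completing the inductive step.

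The main delicacy is securing a representative of $h$ whose insertion yields $\Pi(h)$ without any instances of rule (2), i.e.\ a normal form directly mirroring the piling; once this is available, the inductive bookkeeping is essentially automatic, with the hypothesis intervening only at the single moment when a previously empty column of $\Pi(h_{k-1})$ is opened and one must rule out a conflict with the top of the corresponding column of $\Pi(g)$.
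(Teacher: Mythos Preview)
Your argument is correct and takes a somewhat different route from the paper's own proof. The paper inducts on the syllable length of $h$ using an \emph{arbitrary} minimal representation $h = w_1\cdots w_n$: it peels off $w_1$ and applies the induction hypothesis twice, once to the pair $(w_1,\,w_1^{-1}h)$ and once to $(gw_1,\,w_1^{-1}h)$, chaining the equalities $\Pi(g)\Pi(h)=\Pi(g)\Pi(w_1)\Pi(w_1^{-1}h)=\Pi(gw_1)\Pi(w_1^{-1}h)=\Pi(gh)$. Each of the last two steps requires re-verifying the disjointness hypothesis for the new pair, in particular checking $\term(gw_1)\cap\init(w_1^{-1}h)=\varnothing$; this uses that $\term(gw_1)\subseteq\{v\}\cup N(v)$ and that $\init(w_1^{-1}h)$ agrees with $\init(h)$ on $N(v)$, details the paper largely leaves implicit.

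You instead front-load the work by selecting a specific representative---a linear extension of the heap order on $\Pi(h)$---for which rule~(2) never fires. This buys you a very clean inductive step: you never need to recompute $\term$ or $\init$ of intermediate elements, and the hypothesis $\term(g)\cap\init(h)=\varnothing$ is invoked exactly once per newly opened column. The trade-off is that you rely on the existence of such a normal-form representative (equivalently, that every piling can be linearized bottom-up into a reduced word), which is true and standard for heaps/trace monoids but is itself a lemma of roughly the same weight as what the paper proves directly. Both approaches are valid; yours is tidier in the induction, the paper's is more self-contained.
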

	  \begin{proof}
		  Given an element $ h \in G $, write $h = w _{1} w _{2} \dots w _{n} $ where each $ w _{i}  $ lies in a different vertex group. Let the \emph{syllable length} of $ h $ be the length $ n $ of the shortest such product. We induct on the syllable length of $ h $.

		  In our base case, $ h $ is just an element from one vertex group $ G _{i} $, so that $ \init(h) = \{ v _{i} \} $. As this set is disjoint from $ \term(g) $, we know that the piling for $ g$ ends with a 0 in the $ i $th string, or the $i$th string is empty. In either case, the piling $ \Pi(gh) $ is given by concatenating $ \Pi(g), \Pi(h) $.

		  Now suppose that the syllable length for $ h $ is equal to $ n > 1$, and let $h =  w _{1} \dots w _{n} $ be a representation of minimal length, which must be reduced. Let $ v $ be the vertex corresponding to $ w _{1} $. Then if $ w _{1} $ and $ w _{k} $ lie in the same vertex group, then there exists $ 1 < i < k $ such that $ v $ and $ v _{i} $ are not adjacent. Then the piling for $ h $, in some coordinate, begins with $ w _{1}0  $. As a result, the piling for $ w _{1} ^{-1} h $ begins with a $ 0 $ in the coordinate corresponding to $ w _{1} $, so that the initial clique of $ w _{1} ^{-1} h $ is disjoint from $ \{ v \} $. Applying our induction hypothesis, we have \begin{align*} 
			  \Pi(g)\Pi(h) &= \Pi(g) \Pi(w _{1}w _{1} ^{-1}h) \\
				       &= \Pi(g) \Pi(w _{1}) \Pi(w _{1} ^{-1} h) \\
				       &= \Pi(g w _{1}) \Pi(w _{1} ^{-1} h) \\
				       &= \Pi(gh)
			    \end{align*}
	  \end{proof}

	  \begin{defin} 
		    To a word $ \pi \in \mathcal{A}^{*} $, let the syllable length of $ \pi $ be the number of nonzero letters. To a piling $ \Pi = (\pi_{1}, ..., \pi _{D})  $, we define the syllable length of $ \Pi $ be the sum of syllable lengths of $ \pi _{i} $ for $ 1 \leq i \leq n $.
		    \end{defin}


\section{Pivotal Points}

	Our plan is to control the behaviour of a sample path by considering times where it goes in independent directions. In the free group, there are points where the sample path lies in some subtree forever after some time $ n $. Intuitively, we should be able to pivot a sample path about such a point, and get another sample path with the same drift. We plan to show that there are many such points, and that for each point there are many directions in which the sample path moves further away from the identity.

Recall that the paths for our walks are of the form $Z_n = s_1 w_1 \dots s_n w_n$. 

\begin{defin} 
Given a finite path $(s_1, w_1, \dots, s_n, w_n)$ 
of length $2 n$, a time $ k < n$ is \emph{pivotal} with respect to $ n $ if: 
	\begin{itemize} 
		\item The piling of $ Z _{k-1} s _{k} $ prefixes the pilings for $ Z _{k}, Z _{k} s_{k+1}, Z _{k+1}, \dots, Z _{n}  $.
		\item The element $ s _{k} $ is not in $ \term(Z _{k-1}) $ and $ \term(Z _{k-1} s _{k}) $ is disjoint from $ \init(w _{k}) $.
	 \end{itemize}
\end{defin}

	 In the case of the free group, the first condition states that the sample path lies in the subtree starting at $ Z _{k-1} s _{k} $ for all times between $ k $ and $ n $. If the second bullet point is met, we say that $ s _{k} $ satisfies the \emph{local geodesic condition}. To motivate this name, observe that the requirement $ s _{k} \notin \term(Z _{k-1}) $ implies, that $ |Z _{k-1}| < |Z _{k-1} s _{k}| $, where this norm refers to the syllable length. Also, the requirement $ \term(Z _{k-1} s _{k}) \cap \init(w _{k}) = \varnothing$ implies that $ |Z _{k-1} s _{k}| < |Z _{k-1} s _{k} w _{k}| $. Hence at time $ k $ our sample path moves strictly further away from the identity. 

	  Since the piling $ \Pi(Z _{k-1} s _{k}) $ is a prefix for the rest of the sample path, then the syllable length of $ \Pi(Z _{k-1} s_{k})$ lower bounds the syllable length of each of $ \Pi(Z _{k}), \Pi(Z _{k} s_{k+1}),\dots, \Pi(Z _{n})$. 
		  
	  Now if $ k _{1}, \dots, k _{|P _{n}|} $ are our pivotal times, then by the above we get \[0 \leq |Z _{k _{1}-1}| < |Z _{k _{1}+1} s _{k _{1}}| \leq |Z _{k _{2}-1}| < |Z _{k _{2}-1} s _{k _{2}}| \leq \dots \leq|Z _{k _{|P _{n}|} - 1}| < |Z _{k _{|P _{n}| - 1} } s_{|P _{n}|} | \leq |Z _{n}|.\]

	  The above discussion can be summarized in the following lemma:

\begin{lem}
	The distance travelled $ |Z _{n}| $ is bounded below by the number of pivotal points $ \# P _{n} $.
\end{lem}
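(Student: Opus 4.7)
The plan is to make precise the chain of inequalities already sketched in the paragraph immediately preceding the lemma. Label the pivotal times as $k_1 < k_2 < \dots < k_{\# P_n}$. I want to argue that (i) at each pivotal index the syllable length strictly increases by at least one when we append $s_{k_j}$, and (ii) the prefix clause ensures this gain is preserved all the way through to $Z_n$. Summing the gains will give $|Z_n| \geq \# P_n$.

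For step (i), fix a pivotal time $k$, and suppose $s_k \in G_i$. The condition $s_k \notin \term(Z_{k-1})$ means the $i$th string of $\Pi(Z_{k-1})$ is either empty or ends in a $0$. By the inductive piling definition, right-multiplying by $s_k$ then triggers the first case: a new nonzero letter $s_k$ is appended to the $i$th string and some $0$'s are added elsewhere. Thus the syllable length strictly increases, $|Z_{k-1} s_k| = |Z_{k-1}| + 1$. Next, the second pivotal condition $\term(Z_{k-1} s_k) \cap \init(w_k) = \varnothing$ lets me apply the concatenation lemma from Section~2 to conclude $\Pi(Z_k) = \Pi(Z_{k-1} s_k)\, \Pi(w_k)$, and in particular $|Z_k| \geq |Z_{k-1} s_k|$.

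For step (ii), the first pivotal clause says that $\Pi(Z_{k_j - 1} s_{k_j})$ is a prefix of $\Pi(Z_m)$ for every $k_j \leq m \leq n$. Since any prefix has syllable length at most that of the full piling, I immediately get $|Z_m| \geq |Z_{k_j-1} s_{k_j}|$ for all such $m$; applying this at $m = k_{j+1} - 1$ bridges consecutive pivotal times. Chaining over $j = 1, \dots, \#P_n$ produces
\[ 0 \leq |Z_{k_1 - 1}| < |Z_{k_1-1} s_{k_1}| \leq |Z_{k_2-1}| < |Z_{k_2-1} s_{k_2}| \leq \dots \leq |Z_{k_{\#P_n}-1} s_{k_{\#P_n}}| \leq |Z_n|, \]
so $|Z_n| \geq \# P_n$, as desired.

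This is essentially a bookkeeping argument: the real content was packed into the definition of a pivotal time and into the concatenation lemma of Section~2. The only point that needs any care is verifying that the prefix clause is stated at the right endpoints (namely $Z_{k_j - 1} s_{k_j}$ rather than $Z_{k_j}$) so that it can be combined with step (i) to yield a strict gain rather than a mere non-decrease; after that, the conclusion is immediate.
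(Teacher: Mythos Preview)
Your proposal is correct and follows essentially the same approach as the paper: the paper's proof is precisely the discussion preceding the lemma (the strict increase from the local geodesic condition together with the prefix clause yielding the displayed chain of inequalities), and you have simply written it out more carefully. If anything, your version is cleaner than the paper's, whose displayed chain contains some index typos that you have silently corrected.
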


		    As we are conditioning on the words $ w _{1}, \dots, w _{n} $ coming from the measure $ \nu $, our random walk is determined by the words $ s _{1}, \dots, s _{n} $ coming from $ \mu $. Hence to understand our random walk, we should consider the sequence $ (s _{1}, \dots, s _{n}) $ associated to some sample path. 
		    \begin{defin} 
			        A sequence $ (s _{1}, \dots, s _{n}) $ is \emph{pivoted} from $ (s' _{1}, \dots, s' _{n}) $ if they have the same pivotal times and $ s _{k} = s' _{k} $ for each time $ k $ which is not pivotal.
			      \end{defin}
			      The idea here is that two pivoted sequences give rise to sample paths which are in some sense equivalent in terms of their drift. Indeed, this induces an equivalence relation on our set of sequences. Given some sequence $ \overline{s} = (s _{1}, ..., s _{n}) $, let $ \mathcal{E} (\overline{s}) $ be its equivalence class with respect to this relation.

			     We claim that these equivalence classes are large. That is to say, any sequence has many pivoted sequences. This is the key lemma, which will allow us to bound the number of pivotal times from below by a sum of i.i.d. independent random variables.

		    \begin{lem} 
			    Let $ k $ be a pivotal time and replace $ s _{k} $ with any $ s' _{k} $ such that $ s' _{k} $ is not in $ \term(Z _{k-1}) $ and $ s'_{k} $ is not adjacent to $ \init(w _{k}) $. Then the sequence $ (s _{1}, \dots, s _{k}, \dots, s _{n}) $ is pivoted from the sequence $ (s _{1}, \dots, s' _{k}, \dots, s _{n}) $.
		    \end{lem}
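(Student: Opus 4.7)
The plan is to verify the two pivotal conditions for $k$ in the new sequence, and then show that pivotal times at other indices are preserved. The main tool is the concatenation lemma already established, which identifies prefixing of pilings with disjointness between the relevant terminal and initial cliques.

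First I verify that $k$ remains pivotal. Since $v_{s'_k} \notin \term(Z_{k-1})$, case (1) of the piling definition applies when appending $s'_k$, giving
\[
\term(Z_{k-1}s'_k) = \{v_{s'_k}\} \cup \bigl(\term(Z_{k-1}) \cap N(v_{s'_k})\bigr),
\]
a set contained in the closed neighborhood of $v_{s'_k}$. Because $s'_k$ is not adjacent to $\init(w_k)$, this closed neighborhood is disjoint from $\init(w_k)$, yielding $\term(Z_{k-1}s'_k) \cap \init(w_k) = \varnothing$, which is the local geodesic condition at $k$. For the prefix condition, set $h_j = w_k s_{k+1} w_{k+1} \cdots w_j$ for $k \leq j \leq n$ (and analogously for the half-steps $h_j s_{j+1}$). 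The original pivotal hypothesis combined with the concatenation lemma gives $\Pi(Z_j) = \Pi(Z_{k-1}s_k) \cdot \Pi(h_j)$, equivalently $\term(Z_{k-1}s_k) \cap \init(h_j) = \varnothing$. The goal is to upgrade this to $\term(Z_{k-1}s'_k) \cap \init(h_j) = \varnothing$; a second application of the concatenation lemma then yields $\Pi(Z'_j) = \Pi(Z_{k-1}s'_k) \cdot \Pi(h_j)$, from which the prefixing requirement follows.

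The hard part is precisely this upgrade. The strategy is to show that $\init(h_j)$, being a clique, is \emph{anchored} by $\init(w_k)$, in the sense that every vertex of $\init(h_j)$ lies in $\init(w_k) \cup N(\init(w_k))$: intuitively, the leftmost syllable contribution to $h_j$ comes from $w_k$, and any vertex sliding to the front must commute with every element of $\init(w_k)$. Establishing this requires tracking the piling construction for $h_j$ case by case as each syllable $s_{k+i}, w_{k+i}$ is absorbed, using the pivotal hypothesis to rule out problematic cancellations that could shift the initial clique. Once anchoring is in hand, the assumption that $v_{s'_k}$ lies neither in $\init(w_k)$ nor in $N(\init(w_k))$, together with $v_{s'_k} \notin \term(Z_{k-1})$, forces $\term(Z_{k-1}s'_k) \cap \init(h_j) = \varnothing$.

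Finally, the preservation of pivotal times at $j \neq k$ is comparatively routine. For $j < k$ the data $Z_{j-1}, s_j, w_j$ defining the pivotal condition at $j$ is unchanged, and the prefix property for later pilings $\Pi(Z'_l)$ with $l \geq k$ transfers through the factorization $\Pi(Z'_l) = \Pi(Z_{k-1}s'_k) \cdot \Pi(h_l)$ constructed above. For $j > k$ one writes $Z'_{j-1} = (Z_{k-1}s'_k) \cdot (w_k s_{k+1} \cdots w_{j-1})$ and repeats the same concatenation argument with the roles suitably shifted.
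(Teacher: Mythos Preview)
Your approach is essentially the same as the paper's: both arguments split into the three cases $j<k$, $j=k$, $j>k$, and both hinge on using the concatenation lemma to factor $\Pi(Z'_l)=\Pi(Z_{k-1}s'_k)\cdot\Pi(h_l)$ once one knows $\term(Z_{k-1}s'_k)\cap\init(h_l)=\varnothing$. The paper treats the cases in the order $j<k$, $k$, $j>k$ while you do $k$ first, but this is cosmetic. Where you differ is in care: the paper simply \emph{asserts} that non-adjacency of $s'_k$ to $\init(w_k)$ forces $\term(Z_{k-1}s'_k)\cap\init(h_j)=\varnothing$, whereas you correctly flag this as the substantive step and propose the ``anchoring'' of $\init(h_j)$ by the closed neighbourhood of $\init(w_k)$ as the mechanism. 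That is the right intuition and is implicitly what the paper is using.

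One small point to tighten: your final deduction from anchoring handles the $\{v_{s'_k}\}$ part of $\term(Z_{k-1}s'_k)=\{v_{s'_k}\}\cup(\term(Z_{k-1})\cap N(v_{s'_k}))$ cleanly, but the sentence ``together with $v_{s'_k}\notin\term(Z_{k-1})$'' does not by itself dispose of a vertex $u\in\term(Z_{k-1})\cap N(v_{s'_k})$ lying in $\init(h_j)$. You need the slightly stronger form of anchoring---that $\init(w_k)$ actually survives in $\init(h_j)$ (which the pivotal prefix hypothesis guarantees, since $\Pi(w_k)$ sits as the suffix of $\Pi(Z_k)$ and is never fully cancelled)---so that any $u\in\init(h_j)$ is adjacent to some $v\in\init(w_k)$; since $v_{s'_k}$ is \emph{not} adjacent to $v$, appending $s'_k$ places a $0$ in column $v$ before column $u$ is reached, which is what makes the concatenation go through. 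The paper's proof has exactly the same gap, just unacknowledged.
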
 
		    \begin{proof}
			    We need to show that all pivotal times remain pivotal.

		    First let $ j < k $ be a pivotal time before $ k $. We still know that $ s _{j} $ is not in $ \term(Z _{j-1}) $ and $ \term(Z _{j-1} s _{j}) \cap \init(w _{j}) = \varnothing $, so we need to justify that $ Z _{j-1} s _{j} $ is still a piling-prefix for $ Z _{j+1}, Z _{j+1}s _{j+2}, \dots, Z _{n} $. We already have that the piling of $ Z _{j-1} s _{j} $ prefixes the piling of everything in this sequence up to $ Z _{k-1} $, and since $ s _{k} $ satisfies the local geodesic condition we have that $ \Pi(Z _{j}) = \Pi(Z _{k-1})\Pi(s _{k} w _{k}\dots s _{j}w _{j}) $ for all $ k \leq j \leq n $, so that $ j $ is still pivotal.

		    Now we claim that $ k $ is a pivotal time. We only need to show that $ Z _{k-1} s _{k} $ prefixes the rest of the sequence. As $ s' _{k} $ is not adjacent to $ \init(w _{k}) $, then we know that $ \term(Z _{k-1} s' _{k})\cap \init(w _{k}s _{k+1}\dots w _{j})  $ for all $ j > k$. Hence by our lemma we know that the piling $ \Pi(Z _{j})$ is equal to $\Pi(Z _{k-1} s' _{k}) \Pi(w _{k}s _{k+1} \dots w _{j})  $. By the same argument, replacing $ Z _{j} $ with $ Z _{j} s _{j+1} $, we have that $ Z _{k-1} s _{k} $ is a prefix to everything else in the sequence.

		    Finally let $ j > k $ be a pivotal time after $ k $. We already know that $ Z _{j-1} s _{j} $ prefixes the rest of the sequence after time $ j $. As $ s' _{k}  $ is not adjacent to $ \init(w _{k}) $, then $ \term(Z _{k-1} s _{k}) $ is disjoint from $ \init(w _{k}) $. Hence the terminal clique $ \term(Z _{j-1}) $ does not contain $ s _{j} $. As $ s _{j} $ is not adjacent to $ \init( w _{j}) $ then $ \term(Z _{j-1} s _{j}) $ is disjoint from $ \init(w _{j}) $, so that $ j $ is still pivotal.
	    \end{proof}

	    \section{Main Argument}
	    In this section we prove Theorem 1. First we recall some notation. Let $ G $ be a graph product with $ D $ vertices and vertex groups $ G _{1}, \dots, G _{D} $. Let $ C $ be the size of the largest clique, and $ B $ the maximum size of the 1-neighbourhood $N _{1} (K) $ where $ K \subset \Gamma$ ranges through all cliques. Let $ \mu $ be a measure on $ G $ such that $ \mu (G _{i} \setminus \{ e \} ) = \frac{1}{D} $. In other words, $ \mu $ is equally likely to pick out a nontrivial element of any group. Further suppose that $ \nu $ is a measure with $ \nu (e) = 0 $. Consider a random walk driven by $ \mu * \nu $.

	    We want to show that there exists some $ \kappa > 0 $ such that \[ \mathbb{P} (|Z _{n}| \leq \kappa n) \leq e ^{-\kappa n},\] where $ |Z _{n}| $ denotes the word length of $ Z _{n} $ with respect to some fixed generating set. Write $ Z _{n} = s _{1} w _{1}...s _{n} w _{n} $ where $ s _{n} \sim \mu $ and $ w _{n} \sim \nu $. We condition on the $ w _{n} $'s and keep the randomness coming from the $ s _{n}$'s. Then we find a working $ \kappa $ that is independent of our conditioning. To this end, we use our assumptions on the graph to show that the drift is bounded from below by a sum of i.i.d. variables with positive expectation.  

	    Let $ k _{1} < \dots < k _{m} $ be the pivotal times for the sequence $ \overline{s} = (s _{1}, ..., s _{n})$. Conditioning on the equivalence class $ \mathcal{E} (\overline{s}) $, as defined in the previous section.
	     We know from the previous lemma that the random variables $ s _{k _{i}} $ are all independent. We use this independence to argue that there are many pivotal points. 

	    \begin{lem}\label{KeyLemma} 
		    Let $ A _{n} = \# P _{n} $, and suppose that $0 < \frac{B}{D-(B+ C)} < 1 $. Also let $ U $ be an integer-valued random variable, independent of $ A _{n} $ and with distribution  \[ \mathbb{P} (U = 1) = \frac{D-(B+ C)}{D }, \] and \[ \mathbb{P} (U \leq -j) = \frac{B+ C}{D }\cdot \left(\frac{B}{D-(B+ C)} \right) ^{j-1} \] for all $ j > 0 $. Then $ A _{n+1} $ stochastically dominates $ A _{n} +U $ in the sense that 
		    $$ \mathbb{P}(A _{n +1} \geq i) \geq \mathbb{P}(A _{n} + U \geq i)   \qquad \textup{for all } i.$$
		     \end{lem}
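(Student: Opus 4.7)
The plan is to condition on the equivalence class $\mathcal{E}(\overline{s}_n)$ of the first $n$ letters together with $w_{n+1}$, and analyze the increment $A_{n+1} - A_n$ as the fresh letter $s_{n+1} \sim \mu$ is revealed. Under this conditioning, the pivotal times $k_1 < \cdots < k_m$ of the first $n$ steps are determined (so $A_n = m$ and the non-pivotal letters are fixed), while the previous lemma gives the \emph{pivoting freedom} that each pivotal letter $s_{k_i}$ is conditionally uniform over a set of at least $D - (B+C)$ vertex groups, namely those whose vertex avoids $\term(Z_{k_i - 1}) \cup N_1(\init(w_{k_i}))$. The letter $s_{n+1}$ is independent and uniform over all $D$ vertex groups.

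For the positive case, if the vertex $v_{n+1}$ of $s_{n+1}$ avoids $\term(Z_n)$ (a clique of size $\leq C$) and $N_1(\init(w_{n+1}))$ (a set of size $\leq B$), then $\Pi(Z_n s_{n+1})$ extends $\Pi(Z_n)$ by a fresh nonzero entry, and $\term(Z_n s_{n+1})$ is automatically disjoint from $\init(w_{n+1})$, so the concatenation lemma gives $\Pi(Z_{n+1}) = \Pi(Z_n s_{n+1}) \Pi(w_{n+1})$. All earlier pivotal times survive and $n+1$ itself becomes pivotal, so $A_{n+1} \geq A_n + 1$. Since the bad set has size at most $B+C$, this event has conditional probability at least $(D - (B+C))/D$, matching $\mathbb{P}(U = 1)$.

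For the negative case, with probability at most $(B+C)/D$ the vertex $v_{n+1}$ falls in the bad set, matching $\mathbb{P}(U \leq -1)$. Here, appending $s_{n+1}$ may cancel the last entry of the $v_{n+1}$-string of $\Pi(Z_n)$, and this cancellation can propagate backward through the piling prefixes $\Pi(Z_{k_i - 1} s_{k_i})$, destroying trailing pivotal times. The key combinatorial observation is that, for the cancellation to propagate past a pivotal letter $s_{k_i}$ and threaten the next one, the vertex of $s_{k_i}$ must lie in a specific $1$-neighborhood of size $\leq B$ (the $1$-neighborhood of the vertex currently carrying the cancellation); otherwise the piling rules place a $0$ that blocks further propagation. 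Combined with the pivoting freedom that $s_{k_i}$ is conditionally uniform on $\geq D - (B+C)$ vertex groups, the conditional probability of each further propagation is at most $B/(D - (B+C))$. Multiplying gives
\[
\mathbb{P}(A_{n+1} - A_n \leq -j \mid \mathcal{E}(\overline{s}_n)) \leq \frac{B+C}{D} \left(\frac{B}{D - (B+C)}\right)^{j-1},
\]
matching $\mathbb{P}(U \leq -j)$. Since this bound holds conditionally on every equivalence class, integrating out yields the stochastic domination.

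The main obstacle is the combinatorial piling-propagation analysis in the negative case. One must determine precisely which configurations of piling suffixes allow the cancellation from $s_{n+1}$ to reach and destroy each successive pivotal letter, verify that at each stage the "bad" set of next vertices has size at most $B$, and confirm that the pivoting freedom genuinely provides conditional uniformity over at least $D - (B+C)$ vertex groups. This rests on careful use of the concatenation lemma together with the rules governing $0$'s in piling strings.
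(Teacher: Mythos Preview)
Your proposal is correct and follows essentially the same approach as the paper: condition on the equivalence class $\mathcal{E}(\overline{s})$, use the count $|\term(Z_n)| \leq C$ and $|N_1(\init(w_{n+1}))| \leq B$ to get the lower bound $(D-(B+C))/D$ for gaining a pivotal point, and then exploit the pivoting freedom (conditional uniformity of each $s_{k_i}$ over at least $D-(B+C)$ vertex groups, together with independence across pivotal times) to bound the probability of losing $j$ pivotal times geometrically. The only cosmetic difference is that you phrase the negative case as ``cancellation propagating through piling prefixes'' while the paper phrases it as ``$s'_{k_q}$ becoming adjacent to $\init(w_{k_q}\cdots s_{n+1}w_{n+1})$''; these describe the same event, and both give the bound $B/(D-(B+C))$ per step.
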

		      \begin{proof}
			      Fix a sequence $ \overline{s} =(s _{1}, \dots, s _{n}) $ and condition on $ \mathcal{E} (\overline{s}) $. Let $ \overline{s} ' \in \mathcal{E} (\overline{s}) $.

			      Now consider the probability that $ A_{n+1} = A_{n} +1 $. After the last pivotal time, the local behaviour of the random walk is the same over the entire equivalence class $ \mathcal{E} ( \overline{s}) $ - that is to say, the terminal clique of $ Z _{n}' $ is constant over the equivalence class. 

			      We know that the terminal clique $ \term(Z' _{n}) $ has at most $ C $ vertices. Hence there are at least $ D - C $ vertices $ v _{i} $ which are disjoint from $ \term(Z _{n}') $. Likewise, we know that there are at most $ B  $ vertices contained in or adjacent to $ \init(w _{n+1})  $, therefore there are at least $ D - B $ choices for $ G _{i _{n+1}} $ such that $ s _{n+1} $ is not adjacent to $ w _{n+1} $. Hence there are at least $ D-(B+ C) $ choices for $ s _{n+1} $ which add another pivotal point. As our probability distribution is uniform over our set of groups then \[ \mathbb{P} (A _{n+1} \geq A _{n} +1 | \mathcal{E}(\overline{s})) \geq  \frac{D-(B+ C)}{D} .\]

			      Now fix $ j > 0 $ and consider the probability that $ A _{n+1} \leq A _{n} - j $. This first requires that $ s _{n+1} $ fails the local geodesic condition, which happens with probability $ \frac{B+C}{D} $. For $ k _{q} $ to no longer be pivotal, this requires that $ s' _{k _{q}} $ is now adjacent to the initial clique of $ w _{k _{q}}\dots s _{n+1} w _{n+1} $, which happens with probability at most $ \frac{B}{D-( B + C)} $. Conditioned on $ \mathcal{E}  ( \overline{s}) $, the pivotal times $ s _{n+1} $ and $ s' _{k _{q}} $ are independent. Hence \[ \mathbb{P} (A _{n+1} \leq A _{n}-1 | \mathcal{E}  (\overline{s})) \leq \frac{B + C}{D} \cdot \frac{B}{D-( B+ C)} .\]
			      By the same argument, as the $ s' _{k _{i}}  $ are independent, we have that 
			      \[ \mathbb{P} (A _{n +1} \leq A _{n} - j| \mathcal{E} ( \overline{s})) \leq \frac{B + C}{D} \cdot \left(\frac{B}{D-( B+ C)} \right)^{j-1} \] 
for all $ j > 0 $. 

			      As this bound is uniform over conditioning, we have the conclusion of the lemma.
\end{proof}

		      We also make use of the following lemma, standard in the theory of large deviations. 

 \begin{lem} 
	 Let $ U _{1}, \dots, U _{n} $ be i.i.d. copies of a random variable $ U $ and let $ t >0 $ be such that $ \mathbb{E} [e ^{-tU}] < 1 $. Then for $ \kappa < \frac{- \ln \mathbb{E} [e ^{-tU}]}{1+t} $ we have \[ \mathbb{P} (U _{1} + \dots + U _{n} \leq \kappa n) \leq e ^{-\kappa n} .\]  
		    \end{lem}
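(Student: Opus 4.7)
The plan is to apply the standard Chernoff (exponential Markov) trick to the sum $S_n = U_1 + \cdots + U_n$. The event $\{S_n \leq \kappa n\}$ coincides with $\{e^{-t S_n} \geq e^{-t\kappa n}\}$ for any $t > 0$, and this is exactly the form on which Markov's inequality gives a useful bound.

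First I would write
\[ \mathbb{P}(S_n \leq \kappa n) = \mathbb{P}\bigl(e^{-t S_n} \geq e^{-t\kappa n}\bigr) \leq e^{t\kappa n} \mathbb{E}\bigl[e^{-t S_n}\bigr]. \]
By independence of the $U_i$ the moment generating function factors, giving
\[ \mathbb{E}\bigl[e^{-t S_n}\bigr] = \bigl(\mathbb{E}[e^{-tU}]\bigr)^n, \]
so the bound becomes $\mathbb{P}(S_n \leq \kappa n) \leq \exp\bigl(n[\,t\kappa + \ln \mathbb{E}[e^{-tU}]\,]\bigr)$.

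Next I would compare this with the target bound $e^{-\kappa n}$. Taking logarithms, it suffices to verify that $t\kappa + \ln \mathbb{E}[e^{-tU}] \leq -\kappa$, i.e.\ $\kappa(1+t) \leq -\ln \mathbb{E}[e^{-tU}]$. Since the hypothesis $\mathbb{E}[e^{-tU}] < 1$ guarantees that $-\ln \mathbb{E}[e^{-tU}] > 0$, and $1+t > 0$, the assumption $\kappa < \frac{-\ln \mathbb{E}[e^{-tU}]}{1+t}$ gives this inequality immediately (in fact strictly), yielding the conclusion.

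There is no real obstacle here; the lemma is the textbook Cram\'er/Chernoff inequality tailored so that the exponent $\kappa$ on the right-hand side matches the threshold $\kappa n$ on the left. The only mild subtlety is the appearance of $1+t$ rather than $t$ in the denominator, which arises precisely because the probability being bounded and the tail rate are demanded to share the same constant $\kappa$; this is handled by collecting the $t\kappa n$ term from Markov together with the target $-\kappa n$ before solving for the admissible range of $\kappa$.
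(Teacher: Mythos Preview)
Your proof is correct and follows exactly the same approach as the paper: apply Markov's inequality to $e^{-tS_n}$, use independence to factor the expectation, and then check that the resulting bound $e^{t\kappa n}(\mathbb{E}[e^{-tU}])^n$ is at most $e^{-\kappa n}$ precisely when $\kappa(1+t) < -\ln \mathbb{E}[e^{-tU}]$. Your write-up is in fact slightly more detailed than the paper's, which simply states the Markov bound and asserts the final inequality.
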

		    \begin{proof}
			   By Markov's inequality and independence we have, for any $ \kappa > 0 $, 
			   \[ \mathbb{P} (U _{1} + \dots + U _{n} \leq \kappa n) \leq e ^{t\kappa n} \left(   \mathbb{E} \left[e ^{-t U}\right] \right)^n .\] 
			   If we pick 
			   \[ \kappa < \frac{- \ln \mathbb{E} [e ^{-t U}]}{1+t},\] 
			   then the right hand side is less than $ e ^{-\kappa n} $.
		    \end{proof}

		    Now we are ready to prove our main theorem.

\begin{proof}
We have 
\[ \mathbb{E}[U] = \frac{D-B-C}{D} - \frac{(B+C)}{D} \left( \frac{D-B-C}{D-2B-C}\right) .\]

If $ D > 3B+2C $, then $ \mathbb{E} [U] > 0 $. Then $\frac{d}{dt}| _{t = 0 ^{+}} \mathbb{E} [e ^{-tU}] < 0 $, so there exists some $ t > 0 $ such that $ \mathbb{E} [e ^{-tU}] < 1 $. Then there exists some positive $\kappa$ with 
		      \[ \kappa < \frac{- \ln \mathbb{E} [e ^{-tU}] }{1+t}. \]  
		      Now let $ U _{1}, \dots, U _{n} $ be $ n $ i.i.d. copies of $ U $. Iterating the previous lemma we get that $ A _{n} $ stochastically dominates $ U _{1} + \dots + U _{n} $. Hence by our large deviations bound we have \[ \mathbb{P} (|Z _{n}| \leq \kappa n) \leq \mathbb{P} (A _{n} \leq \kappa n) \leq \mathbb{P} (U _{1} + \dots + U _{n} \leq \kappa n) \leq e ^{- \kappa n} . \]
		    \end{proof}

Given a graph product of groups, one can compute the drift for a random walk induced by an alternating measure as follows:

\begin{enumerate} 
	\item Verify that $ D > 3B + 2C $.
	\item Maximize the quantity \[ \frac{- \ln \mathbb{E}[ e ^{-tU}] }{1+t}\] with the constraints $ t > 0 $, $ \mathbb{E} [e ^{-tU}] < 1 $.
	  \end{enumerate}

	  For an example, we consider the family of graphs which are cycles of length $ D $. In this case we have $ C = 2 $ and $ B = 4 $, so that the theorem applies for $ D > 14 $. We compute the drift afforded from Theorem 1 for $ 15 \leq D \leq 12000 $, shown in figure 2.

\begin{figure}[!h] \label{CycleDrift}
	 \centering
	 \includegraphics[width=\textwidth]{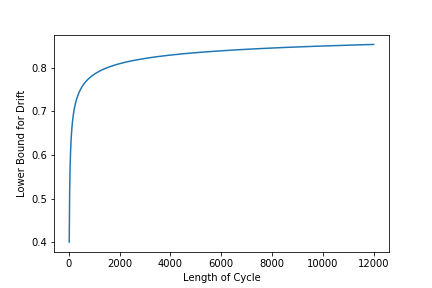}
	 \caption{Our drift estimate for an alternating random walk on a graph product given by a $ D $-cycle.}
 \end{figure}

With further assumptions on $ B,C, $ and $ D $ one can derive asymptotics for this drift estimate. 

\begin{prop} 
	Let $ \alpha \in (0,1/2) $ and suppose that $ B,C \leq o(D ^{1-2\alpha}) $ as $ D \to \infty $. Let $ T $ be the set of positive $ t $ such that $ \mathbb{E} [e ^{-tU}] < 1 $. Then 
	\[ \sup _{t \in T} \frac{- \ln \mathbb{E}\left[ e ^{-tU}\right] }{1+t} \to 1\] 
as $ D \to \infty $. 

	  \end{prop}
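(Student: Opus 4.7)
The plan is to estimate the moment generating function $M(t) := \mathbb{E}[e^{-tU}]$ directly and show it is well-approximated by $pe^{-t}$, where $p := \mathbb{P}(U = 1) = (D-B-C)/D \to 1$. Once this is in hand, the elementary observation that $\frac{-\ln(pe^{-t})}{1+t} = \frac{t - \ln p}{1+t} \to 1$ as $t \to \infty$ does most of the work.

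First I would compute $M(t)$ in closed form. Setting $q := B/(D-B-C)$ so that $\mathbb{P}(U=-j) = \frac{B+C}{D}(1-q)q^{j-1}$ for every $j \geq 1$, summing the resulting geometric series yields
\[M(t) = pe^{-t} + \frac{(B+C)(1-q)\,e^t}{D\,(1-qe^t)},\]
valid whenever $qe^t < 1$. For the upper bound on the supremum, I would use $M(t) \geq pe^{-t}$, giving
\[\frac{-\ln M(t)}{1+t} \leq \frac{t - \ln p}{1+t} = 1 - \frac{1+\ln p}{1+t}.\]
Since $p \to 1$ as $D \to \infty$, eventually $1+\ln p > 0$ and the right-hand side is strictly below $1$ uniformly in $t > 0$, so $\limsup_{D \to \infty} \sup_{t \in T} \frac{-\ln M(t)}{1+t} \leq 1$.

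For the matching lower bound, I would plug in $t_D := \alpha \ln D$, which under the hypothesis $B, C = o(D^{1-2\alpha})$ is calibrated to make the second term of $M(t_D)$ negligible. The crucial estimate is that $qe^{t_D} \leq 2B\cdot D^{\alpha-1} = o(D^{-\alpha})$ tends to zero, so the geometric series converges (hence $t_D \in T$), and the second term of $M(t_D)$ is bounded by $\frac{2(B+C)}{D}\,e^{t_D} = o(D^{1-2\alpha})\cdot D^{\alpha-1} = o(D^{-\alpha}) = o(e^{-t_D})$. Hence $M(t_D) = (1+o(1))\,e^{-t_D}$ and
\[\frac{-\ln M(t_D)}{1+t_D} = \frac{t_D + o(1)}{1+t_D} \longrightarrow 1,\]
which combined with the upper bound proves the claim. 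The main subtlety is simply that the geometric series representing the downward tail of $M(t)$ only converges when $qe^t < 1$; the choice $t_D = \alpha \ln D$ is essentially the exponent that simultaneously controls both $qe^{t_D}$ and the prefactor $(B+C)\,e^{t_D}/D$ using the hypothesis $B+C = o(D^{1-2\alpha})$, and any slower growth $t_D = \beta \ln D$ with $\beta \leq \alpha$ works equally well.
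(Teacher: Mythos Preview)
Your argument is correct and follows essentially the same route as the paper: both pick $t_D = \alpha \ln D$ and show directly that $M(t_D) \sim D^{-\alpha}$, which gives the lower bound $\to 1$. The only difference is in the (easy) upper bound: the paper uses Jensen's inequality to get $\frac{-\ln M(t)}{1+t} \le \frac{t\,\mathbb{E}U}{1+t} \le \mathbb{E}U \to 1$, whereas you drop the positive tail terms to get $M(t) \ge p e^{-t}$, hence $\frac{-\ln M(t)}{1+t} \le \frac{t-\ln p}{1+t} \le 1$ once $p > 1/e$. One wording quibble: ``strictly below $1$ uniformly in $t$'' is not quite right, since $\sup_{t>0}\frac{t-\ln p}{1+t}=1$; but the conclusion $\sup_{t\in T}\frac{-\ln M(t)}{1+t}\le 1$ is nonetheless valid, and the rest of your argument goes through as written.
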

	  \begin{proof}
		  Since $ - \ln $ is convex, by Jensen's inequality we have \[ \frac{-\ln \mathbb{E} \left[e ^{-tU}\right]}{1+t} \leq \frac{\mathbb{E} \left[- \ln e ^{-tU}\right] }{1+t} = \frac{t \mathbb{E} U}{1+t} .\] Since $ \mathbb{E} U > 0 $, then this final term is increasing in $ t $ and goes to $ \mathbb{E} U $ as $ t \to \infty $. Since $ B,C \leq o(D) $ then $ \mathbb{E} U \to 1 $ as $ D \to \infty $. Therefore \[ \lim_{D \to \infty} \sup _{t \in T} \frac{-\ln \mathbb{E} \left[e ^{-tU}\right]}{1+t} \leq 1 .\]

		  Pick $ t _{D} = \ln D ^{\alpha} $. As $ B, C \leq o(D) $, then $ t_D $ satisfies $ \mathbb{E} \left[ e ^{-t_DU}\right] < 1 $ for large enough $ D $.

	  Then \[ \mathbb{E} \left[e ^{-t _{D} U}\right] = D ^{-\alpha} \frac{D-B-C}{D} + (B+C) D ^{\alpha - 1} \frac{D-2B-C}{D-B-C-D ^{\alpha} B}   .\]

	  Therefore \begin{align*} 
		  \frac{-\ln \mathbb{E} \left[e ^{-t_D U} \right]}{1+t_D} &= \frac{-\ln\left( D ^{-\alpha} \frac{D-B-C}{D} + (B+C) D ^{\alpha-1} \frac{D-2B-C}{D-B-C-D ^{\alpha} B}\right)}{1+\ln D ^{\alpha}} \\
									  &\sim \frac{-\ln D ^{-\alpha}}{1+\ln D ^{\alpha}} \\
									  &= \frac{\ln D ^{\alpha}}{1+\ln D ^{\alpha}} \\
									  &\to 1,
		    \end{align*}
		    as $ D \to \infty $, where the second line comes from the fact that $ B, C \leq o(D ^{1-2\alpha}) $ and that the second summand is of order strictly less than $ D ^{1-2\alpha} D ^{\alpha -1} = D ^{\alpha} $.
	  \end{proof}

\newpage
\bibliographystyle{alpha}
\bibliography{GraphProductDrift.bib}

\end{document}